\documentclass[10pt, a4paper]{article}
\usepackage[english]{babel}
\usepackage{amsmath}
\usepackage{cmll}
\usepackage{amssymb}
\usepackage{amsthm}
\usepackage{bbm}
\usepackage{mathrsfs}
\usepackage{stmaryrd}
\usepackage[all]{xy}
\usepackage{bussproofs}

\newcommand{\sse}{\leftrightarrow}
\newcommand{\arr}{\longrightarrow}
\newcommand{\iso}{\simeq}
\newcommand{\imply}{\rightarrow}
\newcommand{\C}{\mathcal{C}}

\newcommand{\ee}{\mathcal{\epsilon}}

\newcommand{\PP}{P}
\newcommand{\pp}{(\C,P)}

\newdir{|>}{%
!/4.5pt/@{|}*:(1,-.2)@^{>}*:(1,+.2)@_{>}}

\mathcode`\:="603A     % : = \colon
\mathchardef\colon="303A  % :=

\mathcode`\<="4268     % < = \langle
\mathcode`\>="5269     % > = \rangle
\mathchardef\gt="313E  % arithmetic
\mathchardef\lt="313C  % strict order

\theoremstyle{definition}
\newtheorem{deff}{Definition}[section]
\newtheorem{prop}[deff]{Proposition}

\newtheorem{lem}[deff]{Lemma}

\newtheorem{Remark}[deff]{Remark}

\date{}
\begin{document}
\title{The axiom of choice, co-comprehension schema and redundancies in triposes}
\author{Fabio Pasquali}
\maketitle
%
%\begin{abstract}j
%\end{abstract}

\section*{Introduction}\label{sec0}

The notion of tripos has been introduced in \cite{tripos} and it turned out to be of central importance in category theory and categorical logic, see \cite{tripos,Jacobs, tripinret} and references therein. Every tripos is an instance of the more general concept of doctrine and, remaining at an informal level, a doctrine can seen as a pair $(\C,P)$ where $\C$ is a cartesian category endowed with a chosen logic $P$ (all formal definitions are postponed to section \ref{sec1}). Following this informal description of doctrines, we can say that, from a purely logical point of view, that a tripos is a doctrine expressing higher order intuitionistic logic with equality\footnote{ It is common to refer at equality in triposes as non-extesional. There are several notions of extensionality in categorical logic. One of these, which can be phrased using the structure of a tripos, is that for two terms $q$ and $p$ of type $\mathbb{P}(A)$ it is $q=p$ if and only if $\forall x:A\ x\in q \sse x\in p$. There are triposes for which this property is valid - it is enough to think at the internal logic of any elementary topos -  but that property fails to hold for generic triposes. In particular it fails in important triposes such as the realizability tripos and in some localic triposes \cite{Jacobs}. In this respect, the equality in triposes is non-extensional.}, i.e. the fragment
$$\mathbb{P}, \Pi, \Sigma, \imply, \wedge, \lor, \top, \bot, =$$
where we wrote $\mathbb{P}$ to indicate the fact that the logic is higher order, $\Pi$ do indicate universal quantification and $\Sigma$ to indicate existential quantification. \\\\
The structure of a tripos closely relates the category $\C$ with the logic $P$. To stress this correspondence we mention an important fact concerning the theory of triposes, the Tripos To Topos construction \cite{tripos,tripinret}. This is a construction which takes any tripos $(\C,P)$ and produces an elementary topos, usually denote by $\C[P]$, in such a way that the construction provides a left adjoint functor to the inclusion of the category of elementary toposes and logical functors is into the category of triposes and logical functors \cite{TTT}. Known examples of such toposes are the effective topos and any topos of sheaves over a locale \cite{tripinret}. Thus, saying that $\C$ is the base of a tripos, i.e that there is a pair $(\C,P)$ which is a tripos, is to say that $\C$ already possesses a sufficient structure to be freely completed to an elementary topos.\\\\ 
A fact of particular interest for this paper is that there are some redundancies in the notion of tripos. Using Prawitz 's second order encoding of first order predicates logic \cite{Pra}, it has been proved that a doctrine expressing the fragment $$\mathbb{P}, \Pi, \imply$$
is a tripos \cite{tripos, Jacobs}.\\\\
As one might aspect, the operation of implication $\imply$ is strongly connected to universal quantification $\Pi$. This connection can be outlined if the doctrine $(\C,P)$ has comprehension. In a doctrine $(\C,P)$ the property of having comprehension is the possibility to make correspond formulas of the logic $P$ to arrows of $\C$, in such a way that, if $\phi$ is the formula of $P$ to which corresponds the arrow $\lfloor\phi\rfloor$, we can think of the domain of $\lfloor\phi\rfloor$ as the abstract collection of those terms $t$ such that $\phi(t)$ holds. Doctrines with comprehension and triposes with comprehension share many interesting properties which we will not discuss here and we address interested readers to \cite{Giacobbe, Jacobs}.\\\\ A theorem in \cite{RM} states that a doctrine expressing the fragment $\{\}, \Pi, \wedge, \top$,  where we use $\{\}$ to denote that the doctrine has comprehension, has also the operation of implication. An obvious corollary is that a doctrine expressing the fragment
$$\mathbb{P}, \{\}, \Pi, \wedge, \top$$
is a tripos with comprehension.\\\\
In this paper we study the notion of co-comprehension (the dual notion of comprehension) and the axiom of choice in the context of doctrines. In particular we study how these two properties contribute to definability, i.e. in reconstructing a certain structure from a (apparently) poorer one.\\\\
Co-comprehension is the dual notion of comprehension, i.e the possibility to associate to every formula $\phi$ of $P$ a monic $\lceil\phi\rceil$ of $\C$ in such a way that the domain of $\lceil\phi\rceil$ abstractly represents the collection of those terms $t$ such that $\phi(t)$ does not hold. Of course in classical logic both the notions of comprehension and co-comprehension are derivable from the other, but they are independent if we work in an intuitionistic framework.\\\\
We formulate the axiom of choice (AC) in a doctrine $(\C,P)$ by requiring that every existentially quantified formulas of $P$ has a chosen witness (as in Hilbert's epsilon calculus).\\\\
We prove that if a cartesian category $\C$ is the base of a doctrine $\pp$ expressing the fragment 
$$\text{AC}, \{\}^o, \mathbb{P}, \wedge,\bot, =$$
(where we denote by $\{\}^o$ the property of having co-comprehension) then $\C$ is also the base of a tripos $(\C,P')$. In other words $\C$ already has the sufficient structure to freely build a topos. The way in which $P'$ is built is instrumental to prove our second result, i.e. that any doctrine expressing the fragment 
$$\text{AC}, \{\}^o, \mathbb{P}, \imply, =$$
is already a tripos.\\\\
The paper is organized as follows. In section \ref{sec1} we recall definitions and known facts concerning doctrines and triposes. We recall also the notion of comprehension and full comprehension. Even if we did not stress the difference between comprehension and full comprehension in the previous part of the introduction, this is very important in the paper (and for the theory of doctrines in general). The role of fullness of comprehension in relation with the operation of implication is discussed in section \ref{sec4}, where we prove that every doctrine with full comprehension and right adjoints which satisfy (a form of) Beck-Chevalley condition is implicational. In the same section we introduce also the notion of co-comprehension and full co-comprehension and we study its link with the operation of negation. In particular we study the link between fullness of co-comprehension and classical logic. In section \ref{sec2} we give the definition of doctrine validating AC and we develop a piece of theory of these doctrines. In section \ref{sec5} we study those doctrines that have equality predicates, full co-comprehension and which satisfy the axiom of choice. We prove that if AC well behaves with respect to equality and co-comprehension in a higher order doctrine $\pp$, then we can build on $\C$ a tripos $(\C,P')$. We prove also that, in the case $\pp$ has the operation of implication, is already a tripos.
\section{Preliminaries on doctrines and triposes}\label{sec1}
In this section we recall several definitions concerning doctrines. We refer the reader to \cite{Jacobs,PittsCL} for a more detailed account.
\subsection*{Doctrines}
Denote by \textbf{Pos} the category of partially ordered sets and monotone functions and by \textbf{MSL} the subcategory of \textbf{Pos} on meet-semilattices and homomorphisms between them, where a meet-semilattice is a poset with binary meets.
\\\\
A doctrine is a pair $(\C,P)$ where $\C$ is a category with finite products and $P$ is a functor $$P:\C^{op}\arr\textbf{Pos}$$
$(\C,P)$ is said \textbf{primary} if $P$ factors through \textbf{MSL}.\\\\
%\footnote{This definition of primary doctrine may differ from the usage of some authors who require that all posets of the form $P(A)$ have finite meets.}.\\\\
In lattices of the form $P(A)$ we shall denote binary meets by $\wedge$ and the top element, if it exists, by $\top_A$. The category $\C$ is called base of the doctrine. For every arrow $f$ of the base, we shall write $f^*$ instead of $P(f)$.\\\\
Suppose $\pp$ is a doctrine. Suppose $u:X\arr A$ is an arrow of the base. Suppose that there are functors $$\Pi_u,\Sigma_u:\PP(X)\arr\PP(A)$$ such that for every $\alpha$ in $\PP(X)$ and every $\beta$ in $\PP(A)$ it is $$\alpha\le u^*\Sigma_u\alpha\ \ \ \ \ \ \text{and}\ \ \ \ \ \ \Sigma_uu^*\beta\le\beta$$
$$u^*\Pi_u\alpha\le\alpha\ \ \ \ \ \ \text{and}\ \ \ \ \ \ \ \beta\le\Pi_uu^*\beta$$
$\Sigma_u$ is said to be left adjoint to $u^*$, while $\Pi_u$ is said to be right adjoint to $u^*$. We denote this by $\Sigma_u\dashv u^*$ and $u^*\dashv\Pi_u$.\\\\
A doctrine $\pp$ is \textbf{elementary} if it is primary and for every $A$ in $\C$ there is an element $\delta_A$ in $P(A\times A)$ such that for every arrow $f:A\arr X$ in $\C$ and every $\psi$ in $P(X\times A)$ the assignment $$\psi\mapsto <\pi_1,\pi_2>^*\psi\wedge<\pi_2,\pi_3>^*\delta_A$$ gives rise to a functor $$\Sigma_{(id_X\times \Delta_A)}:P(X\times A)\arr P(X\times A\times A)$$ which is left adjoint to $(id_X\times \Delta_A)^*$, where $\pi_i$, with $i=1,2,3$, are the projections from $X\times A\times A$ to each of the factors and $\Delta_A = <id_A,id_A>:A\arr A\times A$ is the diagonal arrow of $A$.\\\\
The element $\delta_A$ in $P(A\times A)$ is often called \textbf{equality predicate} over $A$. Equality predicates are \textbf{substitutive}, i.e for every $A$ in $\C$ and every $\psi$ in $P(A)$ $$\pi_1^*\psi\wedge\delta_A = \pi_2^*\psi\wedge\delta_A$$
where $\pi_1$ and $\pi_2$ are the first and second projection from $A\times A$.\\\\
Let $\mathcal{A}$ be a class of arrows of $\C$ which is stable under pullbacks, i.e. if $f:A\arr B$ is in $\mathcal{A}$ and $p:X\arr B$ is an arrow in $\C$, the pullback of $f$ along $p$, if it exists, is in $\mathcal{A}$. Suppose now that for every arrow $f:A\arr B$ in $\mathcal{A}$ the functor $f^*$ has a left adjoint $\Sigma_f$. We say that left adjoints satisfy the \textbf{Beck-Chevalley condition} with respect to $\mathcal{A}$ if for every pullback
\[
\xymatrix{
Y\ar[r]^-{g}\ar[d]_-{k}&X\ar[d]^-{h}\\
A\ar[r]_-{f}&B
}
\] 
with $f$ (and therefore $g$) in $\mathcal{A}$, it holds that $$h^*\Sigma_f \gamma\iso \Sigma_gk^*\gamma$$for any $\gamma$ in $\PP(A)$. We say that left adjoints verify the \textbf{restricted Beck-Chevalley condition} with respect to $\mathcal{A}$ if for any $\xi$ in $\PP(B)$ it holds $$h^*\Sigma_f f^*\xi\iso \Sigma_gk^*f^*\xi$$ Definitions for right adjoints $\Pi$ are analogous.\\\\
Suppose $\mathcal{A}$ is a pullback stable class of morphisms of $\C$. Let $\pp$ be a doctrine.
$\pp$ is a \textbf{(restricted) $\Sigma(\mathcal{A})$-doctrine} if for every arrows $f$ in $\mathcal{A}$ the functor $f^*$ has a left adjoint $\Sigma_{f}$ satisfying the (restricted) Beck-Chevalley condition with respect to $\mathcal{A}$.\\\\
Recall that in every category with finite products the class $Prj$ of product projections, i.e. the class of arrows of the form $\pi_A:X\arr A$ where $A$ appears as a factor in $X$, is stable under pullbacks. We shall write $\Sigma$-doctrine instead of $\Sigma(Prj)$-doctrine.\\\\ $\Pi(\mathcal{A})$-doctrines, restricted $\Pi(\mathcal{A})$-doctrines and $\Pi$-doctrines are defined analogously.\\\\
A primary $\Sigma(\mathcal{A})$-doctrine in which for every $f:X\arr A$ in $\mathcal{A}$, for every $\alpha$ in $P(X)$ and every $\beta$ in $P(A)$ it is $$\Sigma_{f}(\alpha\wedge f^*\beta) = \beta\wedge\Sigma_{f}\alpha$$
is said to satisfy \textbf{Frobenius Reciprocity}. $\Sigma$-doctrines which satisfy Frobenius Reciprocity are called \textbf{existential}.\\\\
Every existential elementary doctrine $(\C,P)$ has the property that for every arrow $f:A\arr B$ in $\C$ the functor $f^*$ has a left adjoint which is computed by the following assignment
$$\Sigma_f\alpha=\Sigma_{\pi_B}[(f\times id_B)^*\delta_B\wedge\pi_A^*\alpha]$$
where $\pi_B$ and $\pi_A$ are projections from $A\times B$ to each of the factors \cite{Jacobs, tripinret}.
\subsection*{Comprehension}
A doctrine $(\C,P)$ is said to have \textbf{comprehension} if for every $A$ in $\C$ the poset $P(A)$ has a top element $\top_A$ and for every $\alpha$ in $P(A)$ there is a morphism $$\lfloor\alpha\rfloor:\{\alpha\}\arr A$$ with $\lfloor\alpha\rfloor^*\alpha=\top_{\{\alpha\}}$ such that for every arrow $f:Y\arr A$ with $f^*\alpha=\top_Y$ there is a unique arrow $k:Y\arr \{\alpha\}$ with $\lfloor\alpha\rfloor k= f$.\\\\
It is an easy check to verify that arrows of the form $\lfloor\alpha\rfloor$ are monic. For every $A$ in $\C$ we denote by $\textbf{sub}_\C(A)$ the poset of subobjects of $A$. The assignment $\alpha\mapsto \lfloor\alpha \rfloor$ is a faithful functor from the poset $\PP(A)$ to the poset $\textbf{sub}_\C(A)$\footnote{As customary, we freely confuse a subobject with any of its representative.}. If it is also full we say that the doctrine $(\C,P)$ has \textbf{full comprehension}.\\\\
A useful property of full comprehension is that for every $\alpha,\beta$ in $P(A)$ it holds that $\alpha\le\beta$ if and only if $\top_{\{\alpha\}}=\lfloor\alpha\rfloor^*\beta$.\\\\
Suppose $\pp$ has comprehension. Denote by $\C_P$ the class of arrows of $\C$ of the form $\lfloor\alpha\rfloor$. Then $\C_P$ is stable under pullbacks, in fact for every $A$ in $\C$, every $\alpha$ in $P(A)$ and every $f:X\arr A$ the following square
\[
\xymatrix{
\{f^*\alpha\}\ar[d]_-{\lfloor f^*\alpha\rfloor}\ar[r]^-{q}&\{\alpha\}\ar[d]^-{\lfloor\alpha\rfloor}\\
X\ar[r]_-{f}&A
}
\] 
where $q$ is the arrow induced by the universal property of $\lfloor\alpha\rfloor$,  is a pullback.
\subsection*{Weak power objects}
Suppose $\pp$ is a doctrine. We say that $\pp$ has \textbf{weak power objects} if for every $A$ in $\C$ there is an object $\mathbb{P}(A)$ in $\C$ and an element $\in_A$ in $\PP(A\times \mathbb{P}(A))$ such that for every $Y$ in $\C$ and every $\phi$ in $\PP(A\times Y)$ there is an arrow $$\chi_\phi: Y\arr\mathbb{P}(A)$$ with $(id_A\times \chi_\phi)^*\in_A=\phi$.\\\\\
The term weak comes from the fact that arrows of the form $\chi_\phi$ need not to be unique. A doctrine is said \textbf{higher order} if it has weak power objects.
%It can be shown \cite{tripinret} that for every $f:A\arr X$ in $\C$ it is
%$$\Sigma_f\top_A = \Sigma_{\pi_X}((f\times id_X)^*\delta_X$$
\subsection*{Triposes}
Triposes are the appropriate doctrines to express higher order predicates logic \cite{Jacobs}. 
The following definition of tripos is deduced from \cite{tripinret}.\\\\Denote by \textbf{Hyt} the category of Heyting algebras and homomorphisms between them. A doctrine $\pp$ is said  \textbf{propositional} if $P$ factors through \textbf{Hyt}.\\\\$\pp$ is a \textbf{tripos} if 
\begin{itemize}
\item[i)] $\pp$ is propositional
\item[ii)] $\pp$ is a $\Sigma$-doctrine and a $\Pi$-doctrine
\item[iii)] for each diagonal $\Delta_X:X\arr X\times X$ there is $\delta_X$ in $\PP(X\times X)$ such that for every $\alpha$ in $\PP(X\times X)$
$$\top_X\le\Delta_X^*\alpha\ \ \ \ \ \ \text{if and only if}\ \ \ \ \ \ \delta_X\le\alpha$$
\item[iv)] $\pp$ is higher order.
\end{itemize}
Triposes are elementary doctrines, where the elementary structure is provided by the point $iii$ of the definition.\\\\
An important characterization theorem of the theory of triposes is the following.
\begin{prop}\label{redundus} A doctrine $\pp$ is a tripos if and only if 
\begin{itemize}
\item[i)] $\pp$ is a $\Pi$-doctrine
\item[ii)] for every $A$ in $\C$ there is an operation $\imply:P(A)\times P(A)\arr P(A)$ such that for every $f:X\arr A$ and every $\alpha,\beta$ in $P(A)$ it is $f^*(\alpha\imply\beta)=f^*\alpha\imply f^*\beta$
\item[iii)] for every projection $\pi_A:X\times A\arr A$ in $\C$ it is $$\Pi_{\pi_A}(\pi_A^*\alpha\imply \beta) = \alpha\imply\Pi_{\pi_A} \beta $$
for any $\alpha$ in $P(A)$ and $\beta$ in $P(X\times A)$
\item[iv)] for every $\gamma,\phi$ and $\psi$ in $P(A)$ it holds
\begin{itemize}
\item[a)] $\phi\le\psi\imply\phi$
\item[b)]$\gamma\imply(\phi\imply\psi)\le(\gamma\imply\phi)\imply(\gamma\imply\psi)$
\item[c)]if $\gamma\le\phi\imply\psi$ and $\gamma\le\phi$, then $\gamma\le\psi$
\item[d)]if $\phi\le\psi$, then $\gamma\le\phi\imply\psi$
\end{itemize}
\item[v)] $\pp$ is higher order.
\end{itemize}
\end{prop}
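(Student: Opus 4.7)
For the forward direction, one unpacks the tripos definition. A tripos is by definition a $\Pi$-doctrine (giving $i$) and higher order (giving $v$). Since $P$ factors through \textbf{Hyt}, each fiber is a Heyting algebra, which furnishes the implication of $ii$; its stability under reindexing follows because the factorization passes through Heyting homomorphisms, and properties $a$--$d$ of $iv$ are standard Heyting facts. Clause $iii$ is the usual interplay between the adjunction $\pi_A^*\dashv\Pi_{\pi_A}$ and Heyting implication.

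For the converse the plan is to use a Prawitz-style second-order encoding to reconstruct the full tripos structure from $(i)$--$(v)$. Being higher order, $\pp$ has weak power objects: $\mathbb{P}(1)$ plays the role of an internal type of propositions and, for each $A$, $\mathbb{P}(A)$ lets one quantify over predicates on $A$. Using the right adjoints $\Pi$ along projections to $\mathbb{P}(1)$ or $\mathbb{P}(A)$ together with $\imply$ from $(ii)$, I would define the missing structure by the customary formulas: $\top$ and $\bot$ as suitable $\Pi_\pi$ over a propositional variable, binary meet and join as the Prawitz encodings of $\wedge$ and $\lor$, the left adjoints $\Sigma_f$ via the Prawitz encoding of $\exists$, and the equality predicate $\delta_X$ as Leibniz equality (the pair $(x,y)$ is equal when every predicate over $X$ containing $x$ also contains $y$).

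The verifications then split into three blocks. First, each $P(A)$ must be shown to be a Heyting algebra and each $f^*$ a Heyting homomorphism: the algebraic laws come from $(iv)$, while stability of $\imply$ in $(ii)$ combined with Beck--Chevalley for $\Pi$ from $(i)$ guarantees that each encoded connective commutes with reindexing. Second, the encoded $\Sigma_f$ must be proved a genuine left adjoint to $f^*$ satisfying Beck--Chevalley along projections: the unit and counit come from modus ponens (condition $c$ of $iv$) and the adjunction $f^*\dashv\Pi_f$, while Beck--Chevalley for $\Sigma$ is inherited from that of the nested $\Pi$'s. Third, the constructed $\delta_X$ must satisfy the elementary clause $iii$ of the tripos definition; this follows from the substitutivity built into Leibniz equality, using that every $\alpha\in P(X)$ is reindexed from $\in_X$ along some classifying map $\chi_\alpha$.

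The main obstacle I foresee is the Beck--Chevalley bookkeeping for the encoded $\Sigma_f$: since the encoding folds $f$ inside a nested $\Pi$-$\imply$-$\Pi$ expression, commuting an arbitrary reindexing $h^*$ past $\Sigma_f$ requires repeated use of the stability of $\imply$ together with both Beck--Chevalley conditions available on $\Pi$-quantifiers. Once these stability lemmas are in place, the remaining checks reduce to Hilbert-style derivations in the fragment $(iv)$.
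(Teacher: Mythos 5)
Your outline is essentially the proof the paper relies on: the paper does not prove Proposition \ref{redundus} itself but cites it as Theorem 1.4 of \cite{tripos}, remarking only that the argument ``basically relies on the second order encoding \`a la Prawitz'' --- which is exactly what you sketch (forward direction by unpacking the tripos definition, converse by encoding $\top$, $\bot$, $\wedge$, $\lor$, the left adjoints $\Sigma$ along projections, and Leibniz equality via weak power objects, $\Pi$ and $\imply$, then verifying Heyting structure, adjointness with Beck--Chevalley, and the elementary clause). Your decomposition and verification obligations match the standard argument in \cite{tripos} and \cite{Jacobs}, so there is no divergence to report.
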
 
This is theorem 1.4 of \cite{tripos}, whose proof basically relies on the second order encoding \`a la Prawitz \cite{Pra}. A similar proof is also in \cite{Jacobs}.\\\\
A doctrine which satisfies point ii), iii) and iv) of the previous definition is said \textbf{implicational}.\\\\
An easy property which a primary implicational doctrine has is that if it is a $\Sigma(\mathcal{A})$-doctrine, for some pullback stable class $\mathcal{A}$ of arrows of its base, then it satisfies Frobenius reciprocity \cite{Jacobs}. Therefore primary implicational $\Sigma$-doctrines are automatically existential.
\section{(co-)Comprehension and triposes}\label{sec4}
In this section we derive many useful properties that doctrines have under the assumption of having comprehension. Then we introduce the notion of co-comprehension, the dual notion of comprehension, and we study its connection with the operation of negation that a doctrine may have. A special focus will be given to triposes.\\\\
Recall that in a doctrine $\pp$ with comprehension, the collection $\C_P$ is the class of arrows of $\C$ of the form $\lfloor\alpha\rfloor:\{\alpha\}\arr A$. We already remarked that $\C_P$ is closed under pullbacks. 
\begin{prop}\label{nonloso}
If $(\C,P)$ is a primary doctrine with full comprehension, if for every arrow $\lfloor\alpha\rfloor$ of $\C_P$, the functor $\lfloor\alpha\rfloor^*$ has a left adjoint satisfying Frobenius reciprocity, then $\pp$ is a restricted $\Sigma(\C_P)$-doctrine.
\end{prop}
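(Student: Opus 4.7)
The hypothesis already supplies left adjoints $\Sigma_{\lfloor\alpha\rfloor}$ for every arrow of $\C_P$, so what remains is to verify the restricted Beck--Chevalley condition along the pullback square
\[
\xymatrix{
\{f^*\alpha\}\ar[d]_-{\lfloor f^*\alpha\rfloor}\ar[r]^-{q}&\{\alpha\}\ar[d]^-{\lfloor\alpha\rfloor}\\
X\ar[r]_-{f}&A
}
\]
recalled in the preliminaries. Concretely, for every $f:X\arr A$, every $\alpha$ in $\PP(A)$ and every $\xi$ in $\PP(A)$ the plan is to exhibit an iso $f^*\Sigma_{\lfloor\alpha\rfloor}\lfloor\alpha\rfloor^*\xi\iso\Sigma_{\lfloor f^*\alpha\rfloor}\lfloor f^*\alpha\rfloor^*f^*\xi$, keeping in mind the identity $q^*\lfloor\alpha\rfloor^* = \lfloor f^*\alpha\rfloor^*f^*$ coming from commutativity of the square.

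The single real computation I would isolate is the lemma $\Sigma_{\lfloor\alpha\rfloor}\top_{\{\alpha\}}=\alpha$ in $\PP(A)$. For the direction $\alpha\le\Sigma_{\lfloor\alpha\rfloor}\top_{\{\alpha\}}$ I would invoke the characterization of full comprehension recalled in Section~\ref{sec1}, namely $\alpha\le\beta$ iff $\lfloor\alpha\rfloor^*\beta=\top_{\{\alpha\}}$, with $\beta:=\Sigma_{\lfloor\alpha\rfloor}\top_{\{\alpha\}}$: the unit of $\Sigma_{\lfloor\alpha\rfloor}\dashv\lfloor\alpha\rfloor^*$ gives $\top_{\{\alpha\}}\le\lfloor\alpha\rfloor^*\Sigma_{\lfloor\alpha\rfloor}\top_{\{\alpha\}}$, which is in fact an equality because $\top_{\{\alpha\}}$ is maximal. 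For the reverse inequality, instantiate Frobenius reciprocity $\Sigma_{\lfloor\alpha\rfloor}(\gamma\wedge\lfloor\alpha\rfloor^*\xi)=\xi\wedge\Sigma_{\lfloor\alpha\rfloor}\gamma$ at $\gamma=\top_{\{\alpha\}}$ and $\xi=\alpha$, using the defining identity $\lfloor\alpha\rfloor^*\alpha=\top_{\{\alpha\}}$: the outcome is $\Sigma_{\lfloor\alpha\rfloor}\top_{\{\alpha\}} = \alpha\wedge\Sigma_{\lfloor\alpha\rfloor}\top_{\{\alpha\}}$, hence $\Sigma_{\lfloor\alpha\rfloor}\top_{\{\alpha\}}\le\alpha$.

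Granted this lemma, a further application of Frobenius rewrites the left-hand side as $f^*\Sigma_{\lfloor\alpha\rfloor}\lfloor\alpha\rfloor^*\xi = f^*(\xi\wedge\Sigma_{\lfloor\alpha\rfloor}\top_{\{\alpha\}}) = f^*\xi\wedge f^*\alpha$. For the right-hand side, commutativity of the pullback square gives $q^*\lfloor\alpha\rfloor^*\xi = \lfloor f^*\alpha\rfloor^*f^*\xi$, and running the same Frobenius-plus-lemma computation with $f^*\alpha$ in place of $\alpha$ (using that $\lfloor f^*\alpha\rfloor$ is in $\C_P$, so its left adjoint also satisfies Frobenius by hypothesis) produces $\Sigma_{\lfloor f^*\alpha\rfloor}\lfloor f^*\alpha\rfloor^*f^*\xi = f^*\xi\wedge f^*\alpha$. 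The two expressions then agree.

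The main obstacle is pinpointing the lemma $\Sigma_{\lfloor\alpha\rfloor}\top_{\{\alpha\}}=\alpha$, and this is precisely the place where \emph{fullness} of comprehension is essential: without it the inequality $\alpha\le\Sigma_{\lfloor\alpha\rfloor}\top_{\{\alpha\}}$ need not hold. Everything else is formal manipulation of Frobenius reciprocity and the explicit pullback decomposition of comprehension arrows recalled above. This also clarifies why only the \emph{restricted} Beck--Chevalley identity can be derived by this route, since nothing in the argument ever touches a generic element $\gamma\in\PP(\{\alpha\})$ outside the image of $\lfloor\alpha\rfloor^*$.
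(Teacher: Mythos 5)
Your proposal is correct and follows essentially the same route as the paper: the key lemma $\Sigma_{\lfloor\alpha\rfloor}\top_{\{\alpha\}}=\alpha$ (with fullness used exactly for the inequality $\alpha\le\Sigma_{\lfloor\alpha\rfloor}\top_{\{\alpha\}}$), followed by the Frobenius computation reducing both sides of the restricted Beck--Chevalley identity to $f^*\xi\wedge f^*\alpha$. The only difference is cosmetic: the paper justifies $\alpha\le\Sigma_{\lfloor\alpha\rfloor}\top_{\{\alpha\}}$ by a factorization argument and the reverse inequality by the adjunction unit/counit directly, whereas you use the stated characterization of full comprehension and a Frobenius instance, which amounts to the same thing.
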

\begin{proof}Note that for every $A$ in $\C$ and every $\alpha$ in $\PP(A)$ it is $$\exists_{\lfloor\alpha\rfloor}\top_{\{\alpha\}} =  \alpha$$
In fact $\lfloor\alpha\rfloor$ trivially factors through $\lfloor\exists_{\lfloor\alpha\rfloor}\top_{\{\alpha\}}\rfloor$ and by fullness of comprehension we get $\alpha \le \exists_{\lfloor\alpha\rfloor}\top_{\{\alpha\}}$. The other inequality is standard.\\\\
Given the pullback
\[
\xymatrix{
\{f^*\alpha\}\ar[d]_-{\lfloor f^*\alpha\rfloor}\ar[r]^-{q}&\{\alpha\}\ar[d]^-{\lfloor\alpha\rfloor}\\
X\ar[r]_-{f}&A
}
\] 
by Frobenius Reciprocity we have \begin{equation}\notag
\begin{split}
f^*\Sigma_{\lfloor\alpha\rfloor}\lfloor\alpha\rfloor^*\xi & = f^*(\xi\wedge\Sigma_{\lfloor\alpha\rfloor}\top_{\{\alpha\}} )\\
& = f^*\xi\wedge f^*\alpha\\
& = f^*\xi\wedge \Sigma_{\lfloor f^*\alpha\rfloor}\top_{\{f^*\alpha\}}\\
& = \Sigma_{\lfloor f^*\alpha\rfloor}\lfloor f^*\alpha\rfloor f^*\xi\\
& = \Sigma_{\lfloor f^*\alpha\rfloor}q^*\lfloor \alpha\rfloor^* \xi
\end{split}
\end{equation}
hence the claim.\end{proof}
An immediate corollary of the previous proposition is that every tripos $\pp$ with full-comprehension is a restricted $\Sigma(\C_P)$-doctrine. In fact triposes are implicational existential doctrines, then they left left adjoints satisfy Frobenius reciprocity.\\\\
The following lemma relates full comprehension with the implicational structure of a doctrine.
\begin{lem}\label{bingo} Suppose $(\C,P)$ is a $\Pi$-doctrine with full comprehension, if $(\C,P)$ is a restricted $\Pi(\C_P)$-doctrine then it is implicational.
\end{lem}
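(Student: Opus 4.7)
The natural candidate is $\alpha\imply\beta := \Pi_{\lfloor\alpha\rfloor}\lfloor\alpha\rfloor^*\beta$: full comprehension supplies the arrow $\lfloor\alpha\rfloor\in\C_P$ and the restricted $\Pi(\C_P)$ hypothesis supplies its right adjoint, so the operation is well defined. The whole argument is then driven by the adjoint characterisation
\[
\gamma\le\alpha\imply\beta \iff \lfloor\alpha\rfloor^*\gamma\le\lfloor\alpha\rfloor^*\beta
\]
together with the observation that $\lfloor\top_A\rfloor$ is an isomorphism (a one-line consequence of the universal property of comprehension), which in particular yields $\top\imply\alpha=\alpha$.

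For stability under reindexing (point ii of Proposition \ref{redundus}) I would apply the restricted Beck-Chevalley condition of $\C_P$ to the pullback of $\lfloor\alpha\rfloor$ along $f:X\to A$ already drawn in the excerpt: it gives $f^*\Pi_{\lfloor\alpha\rfloor}\lfloor\alpha\rfloor^*\beta = \Pi_{\lfloor f^*\alpha\rfloor}q^*\lfloor\alpha\rfloor^*\beta$, and the identity $q^*\lfloor\alpha\rfloor^* = \lfloor f^*\alpha\rfloor^* f^*$ rewrites the right hand side as $f^*\alpha\imply f^*\beta$. For point iii I would pull $\lfloor\alpha\rfloor$ back along the projection $\pi_A:X\times A\to A$; the top side $q$ of the resulting pullback is canonically the projection $X\times\{\alpha\}\to\{\alpha\}$, hence lies in $Prj$, so the full Beck-Chevalley condition built into the hypothesis of being a $\Pi$-doctrine applies and delivers $\lfloor\alpha\rfloor^*\Pi_{\pi_A}\beta = \Pi_q\lfloor\pi_A^*\alpha\rfloor^*\beta$. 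Combined with the equality of composites $\pi_A\circ\lfloor\pi_A^*\alpha\rfloor = \lfloor\alpha\rfloor\circ q$ (and uniqueness of right adjoints, so that $\Pi$ commutes with composition), this collapses iii to a manipulation.

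Point iv is handled by the same two tools. Parts (a) and (d) reduce instantly, via the adjoint characterisation and $\lfloor\phi\rfloor^*\phi=\top$, to tautologies. Part (c) uses fullness of comprehension: the hypothesis $\gamma\le\alpha$ forces $\lfloor\gamma\rfloor$ to factor as $\lfloor\alpha\rfloor\circ r$, and reindexing $\lfloor\alpha\rfloor^*\gamma\le\lfloor\alpha\rfloor^*\psi$ along $r$ together with fullness yields $\gamma\le\psi$. The real work is in part (b), the $S$ combinator axiom. My strategy is to iterate the adjoint characterisation: first shift the problem from $P(A)$ to $P(\{\gamma\imply\phi\})$ by pulling back along $u=\lfloor\gamma\imply\phi\rfloor$, then to $P(\{u^*\gamma\})$ by pulling back along $v=\lfloor u^*\gamma\rfloor$. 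At that point stability ii, the identity $\top\imply\alpha=\alpha$, and the definitional equations $u^*(\gamma\imply\phi)=\top$ and $v^*u^*\gamma=\top$ jointly force $v^*u^*\phi=\top$, which collapses both sides of the reduced inequality to $v^*u^*\psi$. I expect most of the bookkeeping, and the only genuine obstacle, to lie in this last verification and in tracking the nested comprehensions.
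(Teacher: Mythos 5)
Your proposal is correct and follows essentially the same route as the paper: the same definition $\alpha\imply\beta=\Pi_{\lfloor\alpha\rfloor}\lfloor\alpha\rfloor^*\beta$, the same use of the restricted Beck--Chevalley condition on $\C_P$ for stability under reindexing, of the pullback of $\lfloor\alpha\rfloor$ along $\pi_A$ (whose top side is a projection) for point iii), and of the adjoint transposition together with fullness of comprehension for the axioms in iv). The only divergence is in iv-b), where you transpose twice (along $u=\lfloor\gamma\imply\phi\rfloor$ and then along $\lfloor u^*\gamma\rfloor$) whereas the paper first establishes the permutation identity $(\gamma\imply\phi)\imply(\gamma\imply\psi)=\gamma\imply((\gamma\imply\phi)\imply\psi)$ and an associativity law under $\lfloor\gamma\rfloor^*$; both verifications rest on the same two ingredients, namely stability of $\imply$ under reindexing and the identity $\top_A\imply\alpha=\alpha$.
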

\begin{proof}For every $A$ in $\C$ and every $\phi,\psi$ in $P(A)$ define$$\phi\imply\psi = \Pi_{\lfloor\phi\rfloor}\lfloor\phi\rfloor^*\psi$$We need to verify points ii), iii) and iv) of definition \ref{redundus}.\\\\
ii) Suppose that $f:X\arr Y$ is an arrow in $\C$. By the restricted Beck-Chevalley condition on the class $\C_P$ we have $$f^*\Pi_{\lfloor\phi\rfloor}\lfloor\phi\rfloor^*\psi = \Pi_{\lfloor f^*\phi\rfloor}\lfloor f^*\phi\rfloor^*f^*\psi$$
which proves that $f^*(\alpha\imply\beta) = f^*\alpha\imply f^*\beta$.\\\\
iii) Suppose that $\pi_A:X\times A\arr A$ is a projection in $\C$. Suppose $\alpha$ is in $P(A)$ and $\beta$ in $P(X\times A)$. Consider the pullback
\[
\xymatrix{
\{\pi_A^*\alpha\}\ar[r]^{p_A}\ar[d]_-{\lfloor\pi_A^*\alpha\rfloor}&\{\alpha\}\ar[d]^-{\lfloor\alpha\rfloor}\\
X\times A\ar[r]_-{\pi_A}&A
}
\] 
Since $(\C,P)$ is a $\Pi$-doctrine the Beck-Chevalley condition holds for $\pi_A$ and $p_A$ (which is a projection, since the square is a pullback). Then it is
\begin{equation}\notag
\begin{split}
\Pi_{\pi_A}(\pi_A^*\alpha\imply \beta) & = \Pi_{\pi_A}\Pi_{\lfloor\pi_A^*\alpha\rfloor}\lfloor\pi_A^*\alpha\rfloor^*\beta\\
& =\Pi_{\lfloor\alpha\rfloor}\Pi_{p_A}\lfloor\pi_A^*\alpha\rfloor^*\beta\\
& =\Pi_{\lfloor\alpha\rfloor}\lfloor\alpha\rfloor^*\Pi_{\pi_A}\beta\\
& =\alpha\imply\Pi_{\pi_A}\beta
\end{split}
\end{equation}
iv) Suppose $\phi, \psi, \gamma$ are in $P(A)$.\\\\
iv-a) This is an immediate consequence of the fact that $\Pi_{\lfloor\psi\rfloor}$ is a right adjoint.\\\\
iv-b) Note that 
\begin{equation}\notag
\begin{split}
(\gamma\imply\phi)\imply(\gamma\imply\psi) & = \Pi_{\lfloor\gamma\imply\phi\rfloor}\lfloor\gamma\imply\phi\rfloor^*(\gamma\imply\psi)\\
& = \Pi_{\lfloor\gamma\imply\phi\rfloor}(\lfloor\gamma\imply\phi\rfloor^*\gamma\imply\lfloor\gamma\imply\phi\rfloor^*\psi)\\
& = \gamma\imply\Pi_{\lfloor\gamma\imply\phi\rfloor}\lfloor\gamma\imply\phi\rfloor^*\psi\\
& = \gamma\imply((\gamma\imply\phi)\imply\psi)
\end{split}
\end{equation}
The arrow  $\lfloor\top_A\rfloor$ is the identity on $A$, hence $\top_A\imply \phi = \Pi_{\lfloor\top_A\rfloor}\lfloor\top_A\rfloor^*\phi = \phi$. Recalling that $\lfloor \gamma\rfloor^*\gamma = \top_A$, we get the following associativity law $$\lfloor\gamma\rfloor^*(\gamma\imply(\phi\imply\psi))=\lfloor \gamma\rfloor^*(\phi\imply\psi) = \lfloor\gamma\rfloor^*((\gamma\imply\phi)\imply\psi)$$  
whence
\begin{equation}\notag
\begin{split}
\gamma\imply(\phi\imply\psi)& \le \Pi_{\lfloor\gamma\rfloor}\lfloor\gamma\rfloor^*((\gamma\imply\phi)\imply\psi)\\
&= \gamma\imply((\gamma\imply\phi)\imply\psi)
 \end{split}
\end{equation}
iv-c) Assume $\gamma\le\phi\imply\psi$ and $\gamma\le\phi$. Since $\lfloor\gamma\rfloor^*\gamma=\top_A$, we have $\lfloor\gamma\rfloor^*\phi\le\lfloor\gamma\rfloor^*\psi$ and hence $\top_A\le\lfloor\gamma\rfloor^*\psi$. By fullness of comprehension $\gamma\le\psi$.\\\\
iv-d) Suppose $\phi\le\psi$. It is $\gamma\le\Pi_{\lfloor\phi\rfloor}\lfloor\phi\rfloor^*\psi$ if and only if $\lfloor\phi\rfloor^*\gamma\le\lfloor\phi\rfloor^*\psi$. Since $\lfloor\phi\rfloor^*\phi = \top_A$, the assumption that $\phi\le\psi$ implies $\lfloor\phi\rfloor^*\psi=\top_A$, from which the claim.
\end{proof}
\begin{Remark}We did not find any reference for lemma \ref{bingo}. Even if the author is aware that similar statements are known in the case in which the doctrine $\pp$ is primary. In this respect the following statement is more general as it does not require a meet operation on posets. If the doctrine $(\C,P)$ is primary the statement is proved by showing that  $\imply$ is a Heyting implication, i.e. $\alpha \wedge \phi \le \psi$ if and only if $\alpha\le \phi\imply\psi$. This proof can be found in \cite{RM}.\end{Remark}
Fullness of comprehension might appear to play a little role in the proof, but in a $\Pi$-doctrine with comprehension the assignment $$(\alpha,\beta) \mapsto \Pi_{\lfloor\alpha\rfloor}\lfloor\alpha\rfloor^*\beta$$provides the implicational structure if and only if comprehension is full. One direction is in the previous theorem. For the converse we have that $\lfloor\alpha\rfloor^*\alpha\le\lfloor\alpha\rfloor^*\beta$ if and only if $\alpha\le\alpha\imply\beta$. Then $\alpha\le\beta$ by iv-c.\\\\
Co-comprehension is the dual notion of comprehension. We say that a doctrine $\pp$ has \textbf{co-comprehension} if for every $A$ in $\C$ the poset $P(A)$ has a bottom element $\bot_A$ and for every $\alpha$ in $\PP(A)$ there is an arrow $$\lceil\alpha\rceil:\{\alpha\}^o\arr A$$ with $\lceil\alpha\rceil^*\alpha \iso \bot_X$ such that for every $f:Y\arr A$ with $f^*\alpha\iso\bot_Y$ there is a unique arrow $k:Y\arr$ with $\lceil\alpha\rceil k = f$.\\\\
Arrows of the form $\lceil\alpha\rceil$ are monic. Thus if $\pp$ has co-comprehension, there is a faithful contravariant functor from $\PP(A)$ to $\textbf{sub}_\C(A)$ which is given by the assignment $\alpha\mapsto\lceil\alpha\rceil$. We say that $\pp$ has full co-comprehension if the previous assignment is also full. If this is the case, it is not hard to prove that $\alpha\le\beta$ if and only if $\lceil\beta\rceil^*\alpha=\bot_{\{\alpha\}^o}$.\\\\  
Let $\pp$ be a doctrine with co-comprehension. Suppose $\alpha$ is in $\PP(A)$ and consider any arrow $f:X\arr A$. The following is a pullback
\[
\xymatrix{
\{f^*\alpha\}^o\ar[r]^-{q}\ar[d]_-{\lceil f^*\alpha\rceil}&\{\alpha\}^o\ar[d]^-{\lceil\alpha\rceil}\\
X\ar[r]_-{f}&A
}
\]
where $q$ is the arrow induced by the universal property of $\lceil\alpha\rceil$.
Hence the class $\C_P^o$ of arrows of $\C$ of the form $\lceil\alpha\rceil$ is closed under pullbacks.\\\\
The notion of co-comprehension is related to the notion of comprehension in the case in which the doctrine as a operation of negation.\\\\
Recall that a primary doctrine \textbf{has negation} if for every $A$ in $\C$ the poset $P(A)$ has a bottom element and there is an operation $\neg:P(A)\arr P(A)$ such that for every $\alpha,\beta$ in $P(A)$, it is $$\alpha\le \neg\beta\ \ \ \ \ \text{if and only if}\ \ \ \ \  \alpha\wedge\beta =\bot_A$$
If it holds also that $\alpha = \neg\neg \alpha$ the doctrine is said \textbf{classical}
\begin{prop} Suppose $\pp$ is a doctrine with comprehension and negation. The following hold.
\begin{itemize}
\item[i)] $\pp$ has co-comprehension.
\item[ii)] if $\pp$ is a restricted $\Sigma(\C_P)$-doctrine, it is also a restricted $\Sigma(\C_P^o)$-doctrine
\item[iii)] if $\pp$ has full comprehension, then $\pp$ has full co-comprehension if and only if $\pp$ is classical.
\end{itemize}
\end{prop}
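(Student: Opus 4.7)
The plan for (i) is to build co-comprehension from the given comprehension by setting $\lceil\alpha\rceil := \lfloor\neg\alpha\rfloor$, so that $\{\alpha\}^o := \{\neg\alpha\}$. To check $\lceil\alpha\rceil^*\alpha \iso \bot_{\{\neg\alpha\}}$, first note that $\alpha\wedge\neg\alpha = \bot_A$ (apply the negation Galois condition to $\neg\alpha\le\neg\alpha$) and then compute, using that $\lfloor\neg\alpha\rfloor^*$ preserves meets and $\top$,
\[
\lfloor\neg\alpha\rfloor^*\alpha \;=\; \lfloor\neg\alpha\rfloor^*\alpha\wedge\lfloor\neg\alpha\rfloor^*\neg\alpha \;=\; \lfloor\neg\alpha\rfloor^*(\alpha\wedge\neg\alpha) \;=\; \lfloor\neg\alpha\rfloor^*\bot_A,
\]
which equals $\bot_{\{\neg\alpha\}}$ because reindexing in a doctrine with negation commutes with $\neg$ and so sends $\bot=\neg\top$ to $\bot$. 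For the universal property, given $f:Y\arr A$ with $f^*\alpha=\bot_Y$, the same commutation gives $f^*\neg\alpha=\neg f^*\alpha=\neg\bot_Y=\top_Y$, and the universal property of the comprehension of $\neg\alpha$ then produces the unique $k:Y\arr\{\neg\alpha\}$ with $\lfloor\neg\alpha\rfloor\, k=f$.

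For (ii), since every $\lceil\alpha\rceil$ equals some $\lfloor\neg\alpha\rfloor$, we have $\C_P^o\subseteq\C_P$. Hence each $\lceil\alpha\rceil^*$ inherits its left adjoint $\Sigma_{\lceil\alpha\rceil}=\Sigma_{\lfloor\neg\alpha\rfloor}$ from the assumed restricted $\Sigma(\C_P)$ structure, and any pullback square with one leg in $\C_P^o$ is in particular one with that leg in $\C_P$, so the restricted Beck-Chevalley condition transports verbatim. Pullback-stability of $\C_P^o$, needed to form the class at all, was already established earlier in the section.

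For (iii), assume full comprehension, so $\alpha\le\beta\Leftrightarrow\lfloor\alpha\rfloor\le\lfloor\beta\rfloor$ in $\textbf{sub}_\C(A)$. By (i), $\lceil\alpha\rceil=\lfloor\neg\alpha\rfloor$, whence $\lceil\beta\rceil\le\lceil\alpha\rceil\Leftrightarrow\neg\beta\le\neg\alpha$. Fullness of co-comprehension is the statement $\alpha\le\beta\Leftrightarrow\lceil\beta\rceil\le\lceil\alpha\rceil$, which, combining the two equivalences, reduces to the implication $\neg\beta\le\neg\alpha\Rightarrow\alpha\le\beta$ (the converse being automatic by contravariance of $\neg$). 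Specializing to $\alpha:=\neg\neg\gamma$, $\beta:=\gamma$, and using the always-valid identity $\neg\gamma=\neg\neg\neg\gamma$, one obtains $\neg\neg\gamma\le\gamma$; together with $\gamma\le\neg\neg\gamma$ this is classicality. Conversely, classicality gives $\neg\beta\le\neg\alpha\Rightarrow\neg\neg\alpha\le\neg\neg\beta\Rightarrow\alpha\le\beta$.

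The only delicate point I expect is the use in (i) of reindexing commuting with $\neg$, equivalently preserving $\bot$. This naturality does not follow from the bare definition of a primary doctrine and must be regarded as part of what ``having negation'' means on a doctrine; without it, $f^*\neg\alpha$ and $\neg f^*\alpha$ can disagree and the universal property of co-comprehension would fail to transfer from that of comprehension.
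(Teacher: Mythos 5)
Your proof is correct and follows essentially the same route as the paper: define $\lceil\alpha\rceil:=\lfloor\neg\alpha\rfloor$, observe that $\C_P^o$ is a pullback-stable subclass of $\C_P$ for (ii), and use full comprehension to reduce fullness of co-comprehension to the implication $\neg\beta\le\neg\alpha\Rightarrow\alpha\le\beta$, which is equivalent to classicality. Your closing caveat is well taken: the paper's own argument likewise uses $f^*(\neg\alpha)=\neg f^*\alpha$ (equivalently, preservation of $\bot$ by reindexing) without comment, so this naturality must indeed be read into what ``having negation'' means for a doctrine.
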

\begin{proof} i) Suppose $\alpha$ is in $P(A)$. Define $$\lceil\alpha\rceil:\{\alpha\}^o\arr A = \lfloor\neg\alpha\rfloor:\{\neg\alpha\}\arr A$$
It is $\top_A= \lfloor\neg\alpha\rfloor^*(\neg\alpha)$ then $\bot_A = \neg \top_A= \neg\lfloor\neg\alpha\rfloor^*(\neg\alpha)=\lfloor\neg\alpha\rfloor^*(\neg\neg\alpha)$. Since $\alpha\le\neg\neg\alpha$ we have $\bot_A= \lfloor\neg\alpha\rfloor^*\alpha$.\\\\
If $f:Y\arr A$ is such that $f^*\alpha=\bot_Y$, then $\top_Y=\neg f^*\alpha = f^*(\neg \alpha)$. By the universal property of comprehension, there is a unique $k:Y\arr \{\alpha\}^o$ with $f=\lfloor\neg\alpha\rfloor k = \lceil\alpha\rceil k$.\\\\
ii) Since arrows in $\C_P^o$ are of the form $\lfloor\neg\alpha\rfloor$, $\C_P^o$ constitutes a pullback stable subclass of $\C_P$. Then the restricted Beck-Chevally condition with respect to $\C_P^o$ is inherited from the restricted Beck-Chevalley condition with respect to $\C_P$.\\\\
iii) Suppose $\pp$ has full comprehension. If $\lfloor\neg\alpha\rfloor^*\beta\le\lfloor\neg\alpha\rfloor^*\alpha$ it is also $\lfloor\neg\alpha\rfloor^*\neg\alpha\le\lfloor\neg\alpha\rfloor^*\neg\beta$. By fullness of comprehension we have $\neg\alpha\le\neg\beta$. Then $\beta\le\alpha$ if and only if $\pp$ is classical. 
\end{proof}
We proved that in a doctrine with full-comprehension, a classical negation implies the existence of full co-comprehension. If we think at co-comprehension as a way to deal with abstract complementation, we may guess that the presence of both full comprehension and full co-comprehension implies the existence of a classical negation. The answer is negative. Consider the doctrine $(\textbf{Top}, \textbf{op})$, where $\textbf{Top}$ is the category of topological spaces and continuous functions and \textbf{op} is the functor which maps every topological space into the collection of its open sets and every continuous function to the inverse image functor. For every space $X$ and every open set $U$ of $X$ the inclusions $$U\hookrightarrow X\ \ \ \ \ \text{and}\ \ \ \ \ U^c\hookrightarrow X$$
where $U$ and its complement $U^c$ are endowed with the subspace topology, provide the full comprehension and the full co-comprehension structure for $(\textbf{Top}, \textbf{op})$, which has not the operation of negation \cite{cofree}.
\section{The axiom of choice in doctrines}\label{sec2}
In this section we introduce the axiom choice in the context of doctrines. The idea is to mimic what happen in the doctrine $(\textbf{Sets}, \mathcal{P})$ where the base is the category of sets and functions and $\mathcal{P}$ is the contravariant powersets functor. The axiom of choice can be phrased in the following way: for every binary relation $\psi$ in $\mathcal{P}(\Gamma\times A)$, where the set $A$ is not empty, there is function $\ee_\psi:\Gamma\arr A$ such that for all element $x$ in $\Gamma$
$$\exists a\in A\ (x,a)\in \phi \sse (x, \ee_\phi(x))\in \phi$$ 
Clearly the fact that $A$ is not empty is not empty is a necessary condition for the existence of the function $\ee_\psi$. Then, to deal with the axiom of choice, in the way we have formulated it, we need a notion of emptiness, at the general level of doctrines. The generalization of the empty set in the context of a generic doctrine is provided by the notion of stable initial object \cite{PittsCL}, i.e an object $0$ of $\C$ which is initial and such that $X\times 0\iso0$ for any $X$.
\begin{deff}\label{prima}
A doctrine $\pp$ satisfies the axiom of choice (AC) if for every object $A$ in $\C$ which is not a stable initial object, for every object $\Gamma$ in $\C$ the functor $\pi_\Gamma^*$ has a left adjoint $\Sigma_{\pi_\Gamma}$ and for every $\psi$ in $\PP(\Gamma\times A)$ there exists an arrow $\ee_\psi:\Gamma\arr A$ such that $$\Sigma_{\pi_\Gamma}\psi = <id_\Gamma,\ee_\psi>^*\psi$$
\end{deff}
We now prove that, under some mild conditions, every doctrine validating the axiom of choice (AC) is a $\Sigma$-doctrine.
\begin{lem}\label{zero} If a $\pp$ validates $AC$, if $\C$ has a stable initial object and if posets $P(A)$ are not empty, then all arrows of the form $k:X\arr 0$ are isomorphisms.
\end{lem}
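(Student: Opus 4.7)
The plan is to show that the unique initial arrow $!_X\colon 0 \to X$, which exists because $0$ is initial, is a two-sided inverse of $k$. The composition $k\circ !_X\colon 0 \to 0$ equals $id_0$ immediately by initiality: any two parallel arrows out of $0$ coincide. So the real content is to establish the other identity $!_X\circ k = id_X$.

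For this I would exploit the stability condition $X\times 0 \iso 0$. Being isomorphic to an initial object, $X\times 0$ is itself initial; in particular, for any $Y$ there is at most one morphism $X\times 0 \to Y$, so the projection $\pi_1\colon X\times 0\to X$ must agree with the composite $!_X\circ\pi_2\colon X\times 0\to 0\to X$. Given $k\colon X\to 0$ in hand, the pairing $\langle id_X,k\rangle\colon X\to X\times 0$ exists, and then
\[
id_X \;=\; \pi_1\circ\langle id_X,k\rangle \;=\; !_X\circ\pi_2\circ\langle id_X,k\rangle \;=\; !_X\circ k,
\]
completing the argument.

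One delicate point worth flagging is that this direct argument appears not to use AC or the nonemptiness of the fibres $P(A)$ at all: it relies only on the stability of $0$. If instead one wants a proof that invokes AC in an essential way, the natural route is by contradiction: suppose $X$ is not a stable initial object, and apply AC to $A = X$ with various $\Gamma$ in order to produce choice witnesses $\ee_\psi\colon\Gamma\to X$ and to force, via the existence of $k$, that every object acquires a map to $0$. Combined with initiality of $0$, this should force $X$ to be isomorphic to $0$ and hence a stable initial object, contradicting the assumption. The hypothesis that $P(A)$ is nonempty is what keeps this strategy nontrivial, because AC only supplies a witness $\ee_\psi$ once some $\psi\in P(\Gamma\times X)$ is given.

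The only technical obstacle in either approach is confirming that $X\times 0$ is initial as a consequence of $X\times 0\iso 0$, and then tracking the uniqueness of morphisms out of it carefully enough to identify $\pi_1$ with $!_X\circ\pi_2$. Once that identification is made, the conclusion drops out by precomposition with $\langle id_X,k\rangle$.
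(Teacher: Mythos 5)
Your direct argument is correct, and it takes a genuinely different route from the paper. The paper argues by contradiction and uses all three hypotheses: if $k:X\arr 0$ were not iso then $X$ would not be a stable initial object (an arrow $X\arr 0$ together with the initial arrow $0\arr X$ would otherwise invert $k$ by initiality of both objects); then non-emptiness of $P(1\times X)$ supplies some $\psi$ to which AC applies, producing a point $\ee_\psi:1\arr X$, and the composite $k\ee_\psi:1\arr 0$ forces $0\iso 1$ and hence degeneracy of $\C$ --- contradicting the fact that $X$ is not a stable initial object. Your proof instead establishes the purely categorical fact that a stable initial object in a category with finite products is \emph{strict}: from $X\times 0\iso 0$ initial you get $\pi_1=!_X\circ\pi_2$ as the unique arrow $X\times 0\arr X$, and precomposing with $\langle id_X,k\rangle$ yields $!_X\circ k=id_X$, while $k\circ !_X=id_0$ is automatic. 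Your observation that this uses neither AC nor the non-emptiness of the fibres is accurate: those hypotheses are not needed for the conclusion, and your argument proves a strictly more general statement about the base category alone. Indeed, the paper's step ``$k\ee_x:1\arr 0$ makes $\C$ degenerate'' itself tacitly relies on essentially the same manipulation of stability that you make explicit, so your version is arguably the cleaner one. The alternative AC-based strategy you sketch at the end is unnecessary and you need not pursue it; the direct argument already settles the lemma.
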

\begin{proof} If $k$ is not iso, $X$ is not a stable initial object, and since $P(1\times X)$ contains an element $x$, by AC we have an arrow $\ee_x:1\arr X$. Then $k\ee_x:1\arr 0$ makes $\C$ degenerate. A contradiction.
\end{proof}
\begin{lem}\label{BC} Suppose $\pp$ is a doctrine satisfying AC. For every projection $\pi_\Gamma:\Gamma\times A\arr \Gamma$ where $A$ is not a stable initial object, for every $\psi\in \PP(\Gamma\times A)$ and every $h:\Gamma\arr A$, it is $$<id_\Gamma, h>^*\psi\le<id_\Gamma,\ee_\psi>^*\psi$$
\end{lem}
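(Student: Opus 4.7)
The plan is to chain together the unit of the adjunction $\Sigma_{\pi_\Gamma}\dashv\pi_\Gamma^*$ with the defining property of $\varepsilon_\psi$. Definition \ref{prima} guarantees that $\pi_\Gamma^*$ does have a left adjoint on objects of the relevant form (since $A$ is not a stable initial object), so both ingredients are available.

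Concretely, I would start from the standard unit inequality of an adjunction, which here reads
\begin{equation}\notag
\psi\le \pi_\Gamma^*\Sigma_{\pi_\Gamma}\psi
\end{equation}
in $P(\Gamma\times A)$. Applying the monotone map $\langle id_\Gamma,h\rangle^*$ to both sides preserves the inequality. On the right-hand side, functoriality of $P$ together with the identity $\pi_\Gamma\circ\langle id_\Gamma,h\rangle=id_\Gamma$ collapses the composite to
\begin{equation}\notag
\langle id_\Gamma,h\rangle^*\pi_\Gamma^*\Sigma_{\pi_\Gamma}\psi
=(\pi_\Gamma\circ\langle id_\Gamma,h\rangle)^*\Sigma_{\pi_\Gamma}\psi
=\Sigma_{\pi_\Gamma}\psi.
\end{equation}
Finally, invoking the defining equation of $\varepsilon_\psi$ from Definition \ref{prima}, namely $\Sigma_{\pi_\Gamma}\psi=\langle id_\Gamma,\varepsilon_\psi\rangle^*\psi$, rewrites the right-hand side as $\langle id_\Gamma,\varepsilon_\psi\rangle^*\psi$, which yields the claimed inequality.

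There is no real obstacle here: the key observation is simply that, whatever section $h$ of $\pi_\Gamma$ one picks, pulling back along $\langle id_\Gamma,h\rangle$ is bounded above by pulling back along $\langle id_\Gamma,\varepsilon_\psi\rangle$ because AC forces the latter pullback to compute the whole existential $\Sigma_{\pi_\Gamma}\psi$. The only hypothesis that needs care is $A$ being non-degenerate, which is exactly what allows us to apply Definition \ref{prima} and obtain the arrow $\varepsilon_\psi$ in the first place.
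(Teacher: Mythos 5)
Your proof is correct and is essentially identical to the paper's: both apply $\langle id_\Gamma,h\rangle^*$ to the unit inequality $\psi\le\pi_\Gamma^*\Sigma_{\pi_\Gamma}\psi$, use $\pi_\Gamma\circ\langle id_\Gamma,h\rangle=id_\Gamma$, and conclude via the AC equation $\Sigma_{\pi_\Gamma}\psi=\langle id_\Gamma,\ee_\psi\rangle^*\psi$. No differences worth noting.
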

\begin{proof} Apply $<id_\Gamma, h>^*$ to both sides of $\psi\le\pi_\Gamma^*\Sigma_{\pi_\Gamma}\psi=\pi_\Gamma^* <id_\Gamma,\ee_\psi>^*\psi$.
\end{proof}
\begin{prop}\label{nonne0} Suppose $\pp$ is a doctrine satisfying AC, such that for every $A$ in $\C$ the poset $P(A)$ has a bottom element. Suppose also that if $0$ is a stable initial object, then $P(0)$ is a singleton. $\pp$ is a $\Sigma$-doctrine.
\end{prop}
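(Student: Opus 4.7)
The plan is to verify both defining properties of a $\Sigma$-doctrine for $\pp$: that for every product projection $\pi$ the functor $\pi^*$ admits a left adjoint $\Sigma_\pi$, and that the Beck-Chevalley condition holds with respect to pullbacks of projections along arbitrary arrows.

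For the existence of left adjoints I would dispatch on whether the projected-away factor is a stable initial object. Given $\pi_\Gamma:\Gamma\times A\to\Gamma$ with $A$ not stable initial, AC directly supplies $\Sigma_{\pi_\Gamma}$ together with the explicit formula $\Sigma_{\pi_\Gamma}\psi=\langle id_\Gamma,\ee_\psi\rangle^*\psi$. If instead $A$ is a stable initial object $0$, then $\Gamma\times A\iso 0$ and by hypothesis $P(\Gamma\times A)$ is the singleton $\{\star\}$; the assignment $\star\mapsto\bot_\Gamma$ (well-defined because $P(\Gamma)$ has a bottom) vacuously serves as a left adjoint to $\pi_\Gamma^*$, since both sides of the adjunction condition are automatic.

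For Beck-Chevalley I would focus on the main case. Pulling $\pi_\Gamma$ back along an arbitrary $h:X\to\Gamma$ with $A$ non-initial yields a square with top edge $h\times id_A$ and left edge $\pi_X:X\times A\to X$. Setting $\chi:=(h\times id_A)^*\psi$, unfolding the AC formula on each side gives
$$h^*\Sigma_{\pi_\Gamma}\psi=\langle h,\ee_\psi\circ h\rangle^*\psi\qquad\text{and}\qquad \Sigma_{\pi_X}\chi=\langle h,\ee_\chi\rangle^*\psi.$$
The inequality $\Sigma_{\pi_X}\chi\le h^*\Sigma_{\pi_\Gamma}\psi$ is a standard mate argument: reindex the unit $\psi\le\pi_\Gamma^*\Sigma_{\pi_\Gamma}\psi$ by $h\times id_A$ to obtain $\chi\le\pi_X^*h^*\Sigma_{\pi_\Gamma}\psi$, then transpose along $\Sigma_{\pi_X}\dashv\pi_X^*$. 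The reverse inequality is exactly where Lemma \ref{BC} is indispensable: applied to $\chi\in P(X\times A)$ and to the arrow $\ee_\psi\circ h:X\to A$, it yields $\langle id_X,\ee_\psi\circ h\rangle^*\chi\le\langle id_X,\ee_\chi\rangle^*\chi$, and unfolding $\chi$ on both sides of this inequality is precisely $h^*\Sigma_{\pi_\Gamma}\psi\le\Sigma_{\pi_X}\chi$.

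The remaining edge case, when $A$ is stable initial, reduces Beck-Chevalley to the identity $h^*\bot_\Gamma=\bot_X$, because every poset in the square other than $P(X)$ and $P(\Gamma)$ collapses to the singleton $P(0)$ and both left adjoints send the unique element to the respective bottom. This collapse is handled by the singleton hypothesis together with Lemma \ref{zero}, which applies when $X$ itself admits an arrow into $0$ and forces $X\iso 0$, so that $P(X)$ is likewise a singleton. The main obstacle I expect is the second direction in the generic case: the two candidate values $\Sigma_{\pi_X}\chi$ and $h^*\Sigma_{\pi_\Gamma}\psi$ involve different AC-witnesses, namely $\ee_\chi$ and $\ee_\psi\circ h$, and AC does not force the witnesses to coincide. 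Lemma \ref{BC} is precisely the mechanism that compares an arbitrary candidate witness against the chosen one, which is exactly what is needed to close the Beck-Chevalley square without appealing to uniqueness of witnesses.
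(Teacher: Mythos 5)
Your proof is correct and follows essentially the same route as the paper's: the case split on whether $A$ is a stable initial object, the use of Lemma \ref{BC} applied to the reindexed predicate and the composite witness $\ee_\psi\circ h$ to obtain the nontrivial direction of Beck--Chevalley (with the other direction by the standard unit/transposition argument), and the collapse of the degenerate case to $h^*\bot_\Gamma=\bot_X$ via the singleton hypothesis and Lemma \ref{zero}. The only cosmetic issue is a notational clash in your last paragraph, where $X$ momentarily denotes the top-left corner of the pullback square rather than the domain of $h$; the intended meaning is clear and matches the paper.
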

\begin{proof}
We need to prove that for every projection $\pi_\Gamma:\Gamma\times A\arr\Gamma$ the left adjoint $\Sigma_{\pi_\Gamma}$ satisfying the Beck-Chevalley condition.\\\\
Suppose $A$ is not a stable initial object. By AC it is $$\Sigma_{\pi_\Gamma}\psi = <id_\Gamma,\ee_\psi>^*\psi$$ Consider an arrow $f:\Delta\arr\Gamma$ and the composition $\ee_\psi f:\Delta\arr A$. By \ref{BC} it is $$<id_\Delta,\ee_\psi f>^*(f\times id_A)^*\psi\le <id_\Delta,\ee_{P(f\times id_A)(\psi)}>^*(f\times id_A)^*\psi$$
since $$(f\times id_A)<id_\Delta,\ee_\psi f>=<f,\ee_\psi f>=<id_\Gamma,\ee_\psi>f$$ the previous inequality can be rewritten as $$f^*\Sigma_{\pi_\Gamma}\psi\le\Sigma_{\pi_\Delta}(f\times id_A)^*\psi$$
which proves the claim, as the other inequality is standard.\\\\
Suppose $A\iso0$ is a stable initial object. Since $0$ is stable we can confuse the projection $\pi_\Gamma$  with the unique arrow $!_\Gamma:0\arr\Gamma$. Since $P(0)$ is a singleton, the assignment $\bot_0\mapsto\bot_\Gamma$ provides a left adjoint to $!_\Gamma^*$.\\\\
By \ref{zero} every arrow with codomain $0$ is iso. So the pullback of $f:\Delta\arr\Gamma$ along the projection $0\arr\Gamma$ is necessarily of the form
\[
\xymatrix{
0\ar[d]_-{id_0}\ar[r]^-{!_\Delta}&\Delta\ar[d]^-{f}\\
0\ar[r]_-{!_\Gamma}&\Gamma
}
\]
Hence $f^*\Sigma_{!_\Gamma}\bot_0 = f^*\bot_\Gamma=\bot_\Delta=\Sigma_{!_\Delta}\bot_0= id_0^*\Sigma_{!_\Delta}\bot_0$.
\end{proof}
Lemmas \ref{BC} and \ref{nonne0} have the following corollary.
\begin{prop}\label{nonne1} A primary doctrine which satisfies the hypothesis of proposition \ref{nonne0} is existential.
\end{prop}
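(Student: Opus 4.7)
The plan is to recall that being \emph{existential} means being a $\Sigma$-doctrine for which left adjoints along projections satisfy Frobenius reciprocity. Proposition \ref{nonne0} already gives us that $\pp$ is a $\Sigma$-doctrine under the stated hypotheses, so the only content to verify is, for each projection $\pi_\Gamma:\Gamma\times A\arr\Gamma$, every $\alpha\in P(\Gamma\times A)$ and every $\beta\in P(\Gamma)$, the identity
\[
\Sigma_{\pi_\Gamma}(\alpha\wedge\pi_\Gamma^*\beta) = \beta\wedge\Sigma_{\pi_\Gamma}\alpha.
\]
The inequality $\Sigma_{\pi_\Gamma}(\alpha\wedge\pi_\Gamma^*\beta)\le\beta\wedge\Sigma_{\pi_\Gamma}\alpha$ is standard: from $\alpha\wedge\pi_\Gamma^*\beta\le\pi_\Gamma^*\beta$ and $\alpha\wedge\pi_\Gamma^*\beta\le\alpha$, adjointness and monotonicity yield the bound directly, so this direction needs no appeal to AC.

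For the nontrivial inequality I would split on whether $A$ is a stable initial object. If $A\iso 0$, then by stability $\Gamma\times A\iso 0$, and the hypothesis that $P(0)$ is a singleton forces both $\alpha$ and $\alpha\wedge\pi_\Gamma^*\beta$ to coincide with $\bot_0$; the explicit description of the left adjoint along $!_\Gamma$ computed inside the proof of proposition \ref{nonne0} then makes both sides equal to $\bot_\Gamma$. If $A$ is not a stable initial object, I would invoke AC to pick a witness $\ee_\alpha:\Gamma\arr A$, so that $\Sigma_{\pi_\Gamma}\alpha = \ple{id_\Gamma,\ee_\alpha}^*\alpha$. Since $\pi_\Gamma\circ\ple{id_\Gamma,\ee_\alpha}=id_\Gamma$ and substitution preserves meets (the primary hypothesis), I rewrite
\[
\beta\wedge\Sigma_{\pi_\Gamma}\alpha = \ple{id_\Gamma,\ee_\alpha}^*\pi_\Gamma^*\beta\wedge\ple{id_\Gamma,\ee_\alpha}^*\alpha = \ple{id_\Gamma,\ee_\alpha}^*(\alpha\wedge\pi_\Gamma^*\beta).
\]

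At this point lemma \ref{BC} applied to $\psi=\alpha\wedge\pi_\Gamma^*\beta$ and $h=\ee_\alpha$ gives
\[
\ple{id_\Gamma,\ee_\alpha}^*(\alpha\wedge\pi_\Gamma^*\beta)\le\ple{id_\Gamma,\ee_{\alpha\wedge\pi_\Gamma^*\beta}}^*(\alpha\wedge\pi_\Gamma^*\beta) = \Sigma_{\pi_\Gamma}(\alpha\wedge\pi_\Gamma^*\beta),
\]
closing the chain. The only subtle step is the degenerate case, where one must remember that stability of the initial object makes $\pi_\Gamma$ essentially coincide with $!_\Gamma:0\arr\Gamma$, so that the left adjoint is the one constructed by hand in proposition \ref{nonne0}; aside from that bookkeeping, the argument is a direct combination of lemma \ref{BC} with the AC-formula for $\Sigma_{\pi_\Gamma}$ and the meet-preservation afforded by being primary.
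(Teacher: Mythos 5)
Your proof is correct and follows essentially the same route as the paper's: both reduce to the inequality $\beta\wedge\Sigma_{\pi_\Gamma}\alpha\le\Sigma_{\pi_\Gamma}(\alpha\wedge\pi_\Gamma^*\beta)$, rewrite the left-hand side as $\ple{id_\Gamma,\ee_\alpha}^*(\alpha\wedge\pi_\Gamma^*\beta)$ using the AC witness for $\alpha$ and the splitting $\pi_\Gamma\ple{id_\Gamma,\ee_\alpha}=id_\Gamma$, and then apply lemma \ref{BC} with $\psi=\alpha\wedge\pi_\Gamma^*\beta$ and $h=\ee_\alpha$. Your treatment of the degenerate case $A\iso 0$ is just a slightly more explicit version of the paper's one-line remark.
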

\begin{proof} Suppose $\pp$ is such a doctrine. After \ref{BC} and \ref{nonne0} it remains to prove that $(\C,P)$ satisfy Frobenius Reciprocity. Consider a projection $\pi_\Gamma:\Gamma\times A\arr A$\\\\
Suppose $A$ is not a stable initial object and for $\phi$ in $P(\Gamma\times A)$ and $\beta$ in $P(A)$ denote by $\xi$ the formula $\phi\wedge\pi_\Gamma^*\beta$. We have  $$\Sigma_{\pi_\Gamma}(\phi\wedge\pi_\Gamma^*\beta)= <id_\Gamma,\ee_\xi>^*(\phi\wedge\pi_\Gamma^*\beta)$$
Moreover $$<id_\Gamma,\ee_\phi>^*\phi\wedge \beta = <id_\Gamma,\ee_\phi>^*(\phi\wedge\pi_\Gamma^*\beta)$$
apply lemma \ref{BC} to get $$\Sigma_{\pi_\Gamma}\phi\wedge \beta=<id_\Gamma,\ee_\phi>^*\phi\wedge \beta \le <id_\Gamma,\ee_\xi>^*(\phi\wedge\pi_\Gamma^*\beta)=\Sigma_{\pi_\Gamma}(\phi\wedge\pi_\Gamma^*\beta)$$
from which the claim follows, as the other inequality is standard.\\\\
If $A\iso 0$ the the claim is trivial since the image of the left adjoints is a bottom element.
\end{proof}
\section{Heacos and triposes}\label{sec5}
In this section we introduce the notion of Heaco which is an acronym for \textbf{H}igher order \textbf{E}lementary doctrine validating \textbf{A}C and with \textbf{CO}-comprehension. We also require that all these structures well interact together in the sense specified below, in the formal definition of heaco. Most of the section is devoted to prove that a heaco $\pp$ has enough structure to construct a tripos with base $\C$.\\\\
Suppose $(\C,P)$ is a doctrine. To every poset $P(A)$, we can associate the poset $P(A)^{op}$ where $\alpha\le \beta$ in $P(A)^{op}$ if and only if $\beta\le\alpha$ in $P(A)$. The functor $$P^o:\C^{op}\arr\textbf{Pos}$$
which maps every object $A$ of $\C$ in $P^o(A)=P(A)^{op}$ and every arrow $f:A\arr B$ of $\C$ in $P^o(f) = P(f)=f^*$ gives rise to a doctrine $(\C,P^o)$.\\\\
It is straightforward to verify that the assignment $(\C,P)\mapsto(\C,P^o)$ is nilpotent, i.e. $(\C,(P^o)^o)=(\C,P)$. Moreover $(\C,P)$ has (full) comprehension if and only if $(\C,P^o)$ has (full) co-comprehension.
\begin{lem}\label{sinistra} If $(\C,P)$ is a higher order $\Sigma$-doctrine with full co-comprehension which is also a restricted $\Sigma(\C_P^o)$-doctrine, then $(\C,P^o)$ is a tripos with full comprehension.
\end{lem}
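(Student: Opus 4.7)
The plan is to reduce everything to Proposition \ref{redundus} via Lemma \ref{bingo}, by carefully translating the hypotheses on $(\C,P)$ into the dual statements on $(\C,P^o)$ and then checking the five points of the tripos characterization.

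First I would spell out the duality dictionary. Since $P^o(A)=P(A)^{op}$ by definition, bottoms in $P$ correspond to tops in $P^o$, co-comprehension arrows in $P$ correspond to comprehension arrows in $P^o$, and in particular $\C_P^o = \C_{P^o}$; thus full co-comprehension of $(\C,P)$ is exactly full comprehension of $(\C,P^o)$, as already observed in the excerpt. Left adjoints to $f^*$ in $P$ become right adjoints to $f^*$ in $P^o$ (the unit/counit inequalities swap direction), and the Beck--Chevalley isomorphism is order-free, so it transfers verbatim. Hence $(\C,P)$ being a $\Sigma$-doctrine yields that $(\C,P^o)$ is a $\Pi$-doctrine, and $(\C,P)$ being a restricted $\Sigma(\C_P^o)$-doctrine yields that $(\C,P^o)$ is a restricted $\Pi(\C_{P^o})$-doctrine.

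Next I would invoke Lemma \ref{bingo} applied to $(\C,P^o)$: a $\Pi$-doctrine with full comprehension which is restricted $\Pi(\C_{P^o})$ is implicational. This gives at once items ii), iii) and iv) of Proposition \ref{redundus} for $(\C,P^o)$. Item i) is already in hand from the previous paragraph. For item v), I would observe that the definition of weak power objects is purely an equational condition on elements of the fibres and substitution along arrows (no use of the order), so the very same data $(\mathbb{P}(A),\in_A)$ witnessing higher order structure for $(\C,P)$ witnesses it for $(\C,P^o)$. Applying Proposition \ref{redundus} then concludes that $(\C,P^o)$ is a tripos, and full comprehension of $(\C,P^o)$ was already recorded.

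I do not expect a serious obstacle: the argument is essentially a bookkeeping exercise, and the non-trivial content was packaged into Lemma \ref{bingo} in the previous section. The only point requiring a little care is the translation of Beck--Chevalley and Frobenius-type conditions under the order reversal, and in particular making sure that the restricted $\Sigma(\C_P^o)$ hypothesis in $P$ lines up with precisely the restricted $\Pi(\C_{P^o})$ hypothesis needed by Lemma \ref{bingo} in $P^o$; this is immediate once $\C_P^o = \C_{P^o}$ is noted.
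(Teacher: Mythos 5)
Your proposal is correct and follows essentially the same route as the paper: dualize the hypotheses (noting $\C_P^o=\C_{P^o}$ and that the weak power object condition is order-independent), apply Lemma \ref{bingo} to $(\C,P^o)$ to get the implicational structure, and conclude by Proposition \ref{redundus}. The paper's own proof is just a terser version of the same bookkeeping.
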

\begin{proof} Since $\pp$ is a $\Sigma$-doctrine, $(\C,P^o)$ is clearly a $\Pi$-doctrine. For the same reason $(\C,P^o)$ is a restricted $\Pi(\C_P^o)$-doctrine, since $\C_P^o$ coincides with $\C_{P^o}$, the class of full comprehension of $(\C,P^o)$. The claim follows from \ref{redundus} and \ref{bingo}. 
\end{proof}
Recall that in an elementary doctrine $(\C,P)$ for every arrow $f:X\arr A$ of $\C$ the formula $$\mathcal{G}(f)=(f\times id_A)^*\delta_X$$ is called graph of $f$.
\begin{deff} An \textbf{eaco} is an elementary doctrine $\pp$ with full co-comprehension satisfying AC, such that: for every $f:X\arr A$ in $\C$ and every $\alpha$ in $P(A)$ it holds that
$$f^*<\ee_{\mathcal{G}(\lceil\alpha\rceil)}, id_A>^*\mathcal{G}(\lceil\alpha\rceil)=<\ee_{\mathcal{G}(\lceil f^*\alpha\rceil)}, id_X>^*\mathcal{G}(\lceil f^*\alpha\rceil)$$
A \textbf{heaco} is an eaco which is higher order.
\end{deff}
Suppose $(\C,P)$ is an eaco. Suppose that $\C$ has a stable initial object $0$.
\begin{prop}\label{checazzo2} $\PP(0)$ is a singleton.
\end{prop}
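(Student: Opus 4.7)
The plan is to show that any $\alpha\in P(0)$ must equal $\bot_0$ by exploiting the co-comprehension arrow $\lceil\alpha\rceil$ together with Lemma \ref{zero}.

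First I would check that Lemma \ref{zero} applies to $(\C,P)$: the doctrine is an eaco, so it satisfies AC, and we are assuming $\C$ has a stable initial object $0$; moreover, since $\pp$ has full co-comprehension each poset $P(A)$ contains at least the bottom element $\bot_A$, so the posets $P(A)$ are non-empty. Lemma \ref{zero} therefore tells us that every arrow of $\C$ with codomain $0$ is an isomorphism.

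Next, fix $\alpha\in P(0)$ and consider the co-comprehension arrow
$$\lceil\alpha\rceil:\{\alpha\}^o\arr 0$$
provided by full co-comprehension. By the previous step, $\lceil\alpha\rceil$ is an isomorphism, hence the reindexing functor $\lceil\alpha\rceil^*:P(0)\arr P(\{\alpha\}^o)$ is itself an isomorphism of posets, and in particular preserves both top and bottom elements. By definition of $\lceil\alpha\rceil$ we have $\lceil\alpha\rceil^*\alpha=\bot_{\{\alpha\}^o}$, and by preservation of bottoms also $\lceil\alpha\rceil^*\bot_0=\bot_{\{\alpha\}^o}$. Since $\lceil\alpha\rceil^*$ is injective on objects this yields $\alpha=\bot_0$.

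As $\alpha\in P(0)$ was arbitrary, $P(0)=\{\bot_0\}$, proving that $P(0)$ is a singleton. I do not foresee a serious obstacle here: the eaco compatibility condition between AC, equality and co-comprehension is not needed for this particular statement; the whole argument rests on the observation that having a bottom element suffices to activate Lemma \ref{zero}, after which co-comprehension collapses $P(0)$ through the isomorphism $\lceil\alpha\rceil:\{\alpha\}^o\arr 0$.
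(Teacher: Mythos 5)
Your proof is correct and follows essentially the same route as the paper: invoke Lemma \ref{zero} to see that $\lceil\alpha\rceil:\{\alpha\}^o\arr 0$ is an isomorphism, then conclude $\alpha=\bot_0$. The only (harmless) difference is in the last step, where the paper appeals to fullness of co-comprehension while you deduce $\alpha=\bot_0$ directly from the fact that reindexing along an isomorphism is an order isomorphism sending $\bot_0$ to $\bot_{\{\alpha\}^o}=\lceil\alpha\rceil^*\alpha$; this even shows fullness is not needed here.
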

\begin{proof} Suppose $\xi$ is in $\PP(0)$ and consider its co-comprehension $$\lceil\xi\rceil:\{\xi\}^o\arr 0$$ By \ref{zero} $\lceil\xi\rceil$ is iso. By fullness of co-comprehension $\xi= \bot_0$.
\end{proof}
\begin{prop}Every eaco is a an existential doctrine.
\end{prop}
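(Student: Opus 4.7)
The plan is to invoke Proposition \ref{nonne1} directly, verifying each of its hypotheses for an arbitrary eaco $\pp$. Recall that \ref{nonne1} asks for a primary doctrine satisfying AC, whose fibres $P(A)$ all have a bottom element, and such that $P(0)$ is a singleton whenever $\C$ has a stable initial object $0$.

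First, an eaco is by definition elementary, and elementary doctrines are primary, so \ref{nonne1} is applicable at all. Second, AC is built into the definition of an eaco. Third, each fibre $P(A)$ has a bottom element $\bot_A$, because the very definition of (full) co-comprehension demands the existence of such a bottom in order to form the arrows $\lceil\alpha\rceil$. Fourth, if $\C$ has a stable initial object $0$, then $P(0)$ is a singleton: this is exactly Proposition \ref{checazzo2}, proved just above by noting that for any $\xi$ in $P(0)$ the arrow $\lceil\xi\rceil:\{\xi\}^o\arr 0$ is forced to be iso by lemma \ref{zero}, so that fullness of co-comprehension yields $\xi=\bot_0$.

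With all four hypotheses in hand, Proposition \ref{nonne1} supplies at once that $\pp$ is a $\Sigma$-doctrine satisfying Frobenius reciprocity, i.e. that it is existential. There is no real obstacle here: the statement is essentially a packaging lemma. Note that the compatibility axiom on $\mathcal{G}(\lceil\alpha\rceil)$ appearing in the definition of eaco is not used in this particular argument; it is reserved for the later constructions where one must reindex chosen witnesses along morphisms of $\C$.
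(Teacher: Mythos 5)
Your proof is correct and takes exactly the same route as the paper, which simply cites Propositions \ref{nonne1} and \ref{checazzo2}; you have merely spelled out the verification of the hypotheses (primary via elementary, AC by definition, bottoms from co-comprehension, $P(0)$ a singleton from \ref{checazzo2}), all of which check out.
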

\begin{proof} This is a corollary of \ref{nonne1} and \ref{checazzo2}. 
\end{proof}
Since an eaco $(\C,P)$ is also elementary we have that for every $f:A\arr B$ the functor $f^*$ has a left adjoint $\Sigma_f$. Recall that we denoted by $\C^o_P$ the class of arrows of $\C$ of the form $\lceil\alpha\rceil$.
\begin{prop}\label{baggins} Every eaco $(\C,P)$ is a restricted $\Sigma(\C_P^o)$-doctrine.
\end{prop}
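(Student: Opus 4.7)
The plan is to reduce the restricted Beck-Chevalley condition along $\C_P^o$-morphisms to the compatibility equation built into the definition of an eaco, using the existential structure to strip away the $\xi$-component. By the previous proposition $(\C,P)$ is existential, so the left adjoint $\Sigma_{\lceil\alpha\rceil}$ exists for every $\alpha$ and Frobenius reciprocity holds. The key preliminary observation I would record is that the canonical formula for left adjoints in existential elementary doctrines gives
$$\Sigma_{\lceil\alpha\rceil}\top_{\{\alpha\}^o}=\Sigma_{\pi_A}\mathcal{G}(\lceil\alpha\rceil),$$
and whenever $\{\alpha\}^o$ is not a stable initial object, AC rewrites this further as $<\ee_{\mathcal{G}(\lceil\alpha\rceil)},id_A>^*\mathcal{G}(\lceil\alpha\rceil)$.

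Equipped with this, I would fix the canonical pullback square defining $\{f^*\alpha\}^o$ along $f:X\arr A$ and apply Frobenius on both sides of the desired restricted Beck-Chevalley equation $f^*\Sigma_{\lceil\alpha\rceil}\lceil\alpha\rceil^*\xi=\Sigma_{\lceil f^*\alpha\rceil}q^*\lceil\alpha\rceil^*\xi$. Because $q^*\lceil\alpha\rceil^*=\lceil f^*\alpha\rceil^*f^*$, Frobenius on the left and on the right collapses the whole equation to the single identity
$$f^*\Sigma_{\lceil\alpha\rceil}\top_{\{\alpha\}^o}=\Sigma_{\lceil f^*\alpha\rceil}\top_{\{f^*\alpha\}^o}.$$
In the generic case where $\{\alpha\}^o$ and $\{f^*\alpha\}^o$ are both non-initial, both sides are rewritten via the AC expression above into the form $<\ee,id>^*\mathcal{G}(\lceil-\rceil)$, at which point the compatibility axiom in the definition of an eaco is exactly the required equality; so this part of the argument is essentially a translation exercise.

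It then remains to dispose of the degenerate situation in which $\{\alpha\}^o$ or $\{f^*\alpha\}^o$ is a stable initial object, which is precisely the case AC cannot handle directly. Here lemma \ref{zero} forces $\{f^*\alpha\}^o$ to be initial whenever $\{\alpha\}^o$ is, and proposition \ref{checazzo2} collapses $P(\{\alpha\}^o)$ and $P(\{f^*\alpha\}^o)$ to singletons. The computation in the second half of the proof of proposition \ref{nonne0} then evaluates both $\Sigma_{\lceil\alpha\rceil}\top_{\{\alpha\}^o}$ and $\Sigma_{\lceil f^*\alpha\rceil}\top_{\{f^*\alpha\}^o}$ to bottom elements, and the identity above becomes the tautology $f^*\bot_A=\bot_X$.

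I expect the only real subtlety to be the book-keeping around AC: once one pins down that the witness for $\Sigma_{\pi_A}\mathcal{G}(\lceil\alpha\rceil)$ is exactly $<\ee,id_A>^*\mathcal{G}(\lceil\alpha\rceil)$, the compatibility axiom of an eaco is tailor-made to give the naturality under $f^*$, and Frobenius reciprocity finishes the argument with no further ingredient.
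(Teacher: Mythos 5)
Your proposal is correct and follows essentially the same route as the paper: both arguments use Frobenius reciprocity (equivalently, the explicit formula $\Sigma_{\lceil\alpha\rceil}\gamma=\Sigma_{\pi_A}[\mathcal{G}(\lceil\alpha\rceil)\wedge\pi^*\gamma]$ together with substitutivity of $\delta_A$) to strip off the parameter and reduce the restricted Beck--Chevalley equation to $f^*\Sigma_{\pi_A}\mathcal{G}(\lceil\alpha\rceil)=\Sigma_{\pi_X}\mathcal{G}(\lceil f^*\alpha\rceil)$, which AC converts into exactly the compatibility axiom of an eaco. The only difference is that you explicitly dispose of the case where $\{\alpha\}^o$ or $\{f^*\alpha\}^o$ is a stable initial object (where AC does not apply), a degenerate case the paper's proof passes over in silence; your treatment of it via Lemma \ref{zero} and Proposition \ref{checazzo2} is sound.
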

\begin{proof} Suppose $(\C,P)$ is an eaco. Suppose also that $f:X\arr A$ is an arrow in $\C$ and $\alpha$ is in $P(A)$. Consider the diagram
\[
\xymatrix{
\{f^*\alpha\}^o\ar[r]^-{q}\ar[d]_-{\lceil f^*\alpha\rceil}&\{\alpha\}^o\ar[d]^-{\lceil\alpha\rceil}\\
X\ar[r]_-{f}&A
}
\]
Since an eaco is existential and elementary, we have that the functors $\lceil\alpha\rceil^*$ has a left adjoint $\Sigma_{\lceil\alpha\rceil}$. We want to prove the restricted Beck-Chevalley condition$$f^*\Sigma_{\lceil\alpha\rceil}\lceil\alpha\rceil^*\beta = \Sigma_{\lceil f^*\alpha\rceil}q^*\lceil\alpha\rceil^*\beta$$
An eaco validates AC, then it is existential as it validates Frobenius reciprocity. Moreovoer equality predicates are substitutive. Then we have that the left hand side of the previous equality is
\begin{equation}\notag
\begin{split}
f^*\Sigma_{\lceil\alpha\rceil}\lceil\alpha\rceil^*\beta & = f^*\Sigma_{\pi_A}(\lceil\alpha\rceil\times id_A)^*\delta_A\wedge \pi_{\{\alpha\}}^*\lceil\alpha\rceil^*\beta\\
& = f^*\Sigma_{\pi_A}(\lceil\alpha\rceil\times id_A)^*\delta_A\wedge \pi_A^*\beta\\
& = f^*\beta\wedge f^*\Sigma_{\pi_A}(\lceil\alpha\rceil\times id_A)^*\delta_A
\end{split}
\end{equation}
For analogous reasons, the right hand side is 
\begin{equation}\notag
\begin{split}
\Sigma_{\lceil f^*\alpha\rceil}q^*\lceil\alpha\rceil^*\beta & = \Sigma_{\lceil f^*\alpha\rceil}\lceil f^*\alpha\rceil^*f^*\beta\\
& = f^*\beta \wedge \Sigma_{\lceil f^*\alpha\rceil}\top_{s}\\
& = f^*\beta\wedge\Sigma_{\pi_X}(\lceil f^*\alpha\rceil\times id_X)^*\delta_X
\end{split}
\end{equation}
Finally
\begin{equation}\notag
\begin{split}
f^*\Sigma_{\pi_A}(\lceil\alpha\rceil\times id_A)^*\delta_A& = f^*\Sigma_{\pi_A}\mathcal{G}(\lceil\alpha\rceil)\\ 
& = f^*<\ee_{\mathcal{G}(\lceil\alpha\rceil)}, id_A>^*\mathcal{G}(\lceil\alpha\rceil)\\ 
& = <\ee_{\mathcal{G}(\lceil f^*\alpha\rceil)}, id_X>^*\mathcal{G}(\lceil f^*\alpha\rceil)\\
& = \Sigma_{\pi_X}\mathcal{G}(\lceil f^*\alpha\rceil)\\ 
& = \Sigma_{\pi_X}(\lceil f^*\alpha\rceil\times id_X)^*\delta_X
\end{split}
\end{equation}
from which the claim. 
\end{proof}
The immediate application of \ref{sinistra} and \ref{baggins} is that every heaco $\pp$ gives rise to a tripos $(\C,P^o)$, this allows us to prove the following propositions.
\begin{prop}\label{frodo} Every heaco is a $\Pi$-doctrine.
\end{prop}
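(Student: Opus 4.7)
The plan is to use the observation, stated just before the proposition, that every heaco $\pp$ gives rise to a tripos $(\C,P^o)$ via lemma \ref{sinistra} and proposition \ref{baggins}. Being a tripos, $(\C,P^o)$ is in particular a $\Sigma$-doctrine: for every projection $\pi_A:\Gamma\times A\arr\Gamma$, the reindexing $\pi_A^*$ viewed in $P^o$ admits a left adjoint $\Sigma^{P^o}_{\pi_A}$, and these left adjoints satisfy the (unrestricted) Beck-Chevalley condition along pullbacks of projections.

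I would then transport this structure back through the involution $(\C,P)\mapsto(\C,P^o)$. Since the order of $P^o(A)$ is by definition opposite to that of $P(A)$, a left adjoint of $\pi_A^*$ in $P^o$ is exactly a right adjoint of $\pi_A^*$ in $P$: setting $\Pi_{\pi_A}:=\Sigma^{P^o}_{\pi_A}$, the unit $\alpha\le_{P^o}\pi_A^*\Sigma^{P^o}_{\pi_A}\alpha$ and counit $\Sigma^{P^o}_{\pi_A}\pi_A^*\beta\le_{P^o}\beta$ become, after reversing the order, precisely the inequalities witnessing $\pi_A^*\dashv\Pi_{\pi_A}$ in $P$. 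The Beck-Chevalley isomorphism for $\Sigma^{P^o}$ along a pullback of projections is an equality of elements in $P^o$, hence an equality in $P$, so it transports verbatim to the Beck-Chevalley condition for the right adjoint $\Pi$ in $\pp$.

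Putting these two observations together yields right adjoints along projection reindexing in $\pp$ satisfying Beck-Chevalley, which is exactly the definition of a $\Pi$-doctrine. Thus the proof is a short duality bookkeeping argument, and no genuine obstacle arises at this point: the substantive work has already been absorbed into the proofs of \ref{sinistra} and \ref{baggins}.
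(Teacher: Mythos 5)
Your proposal is correct and follows exactly the paper's own argument: the paper's proof is precisely the two-line observation that $(\C,P^o)$ is a tripos by \ref{sinistra} and \ref{baggins}, hence a $\Sigma$-doctrine, and that $(\C,(P^o)^o)=(\C,P)$ is therefore a $\Pi$-doctrine. Your additional unpacking of why a left adjoint in $P^o$ is a right adjoint in $P$ and why Beck--Chevalley transports across the order-reversal is just the bookkeeping the paper leaves implicit.
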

\begin{proof} Suppose $\pp$ is a heaco. By \ref{sinistra} and \ref{baggins} $(\C,P^o)$ is a tripos and therefore a $\Sigma$-doctrine. Then $(\C,(P^o)^o)=\pp$ is a $\Pi$-doctrine.
\end{proof}
Using the same argument we can infer that in every heaco, posets of the form $P(A)$ have finite joins (since $P(A)^o$ is a Heyting algebra).
\begin{prop}\label{caratterino}
Suppose $\pp$ is an heaco, the following are equivalent
\begin{itemize}
\item[i)] $\pp$ is implicational
\item[ii)] $\pp$ is a tripos
\end{itemize}
\end{prop}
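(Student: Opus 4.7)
The plan is to read the statement as a direct application of the characterisation theorem, Proposition \ref{redundus}. The direction (ii)$\Rightarrow$(i) is immediate: a tripos, being characterised by \ref{redundus}, satisfies clauses (ii), (iii), (iv) of that proposition, which is precisely the definition of being implicational.

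For the substantive direction (i)$\Rightarrow$(ii), the plan is to check the five clauses of Proposition \ref{redundus} one by one for an implicational heaco $\pp$. Clause (i), that $\pp$ is a $\Pi$-doctrine, has already been established in Proposition \ref{frodo}. Clauses (ii), (iii), (iv) are exactly the content of the word ``implicational'', so they hold by assumption. Clause (v), that $\pp$ is higher order, holds by the very definition of heaco. Thus $\pp$ is a tripos.

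I expect no real obstacle, since the hard work has already been done in the preceding lemmas: the passage to the opposite doctrine $(\C,P^o)$ (Lemma \ref{sinistra}) combined with the restricted Beck--Chevalley condition along co-comprehensions (Proposition \ref{baggins}) gave us the $\Pi$-structure on $\pp$ in Proposition \ref{frodo}, and this is what is needed to close the gap between ``implicational and higher order'' and ``tripos''. The only minor thing worth flagging is that triposes are by definition propositional, but this is automatic once one has meets (from the primary/elementary structure), joins (available in every heaco, as noted after Proposition \ref{frodo}, since $P(A)^{op}$ is a Heyting algebra) and an implication operation satisfying the axioms in \ref{redundus}, which is precisely the implicational assumption.
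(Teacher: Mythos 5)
Your proof is correct and follows exactly the paper's route: (ii)$\Rightarrow$(i) is immediate from the definition of implicational via Proposition \ref{redundus}, and (i)$\Rightarrow$(ii) combines Proposition \ref{frodo} (every heaco is a $\Pi$-doctrine) with the characterisation in Proposition \ref{redundus}, the higher-order clause being part of the definition of heaco. The paper's own proof is just the one-line version of what you wrote.
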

\begin{proof} ii)$\Rightarrow$i) is immediate. The converse is also immediate after \ref{frodo} and \ref{redundus}.
\end{proof}

\end{document}